\numberwithin{equation}{section}
\theoremstyle{plain}
\newtheorem{Theorem}{Theorem}[section]
\newtheorem{Lemma}[Theorem]{Lemma}
\theoremstyle{remark}
\newtheorem{Rem}[Theorem]{Remark}
\theoremstyle{definition}
\DeclareMathOperator{\R}{\mathbb{R}}
\DeclareMathOperator{\Prob}{\mathbb{P}}
\DeclareMathOperator{\E}{\mathbb{E}}
\DeclareMathOperator{\1}{\mathbbm{1}}
\newcommand{\tofd}{\overset{{\rm f.d.d.}}{\underset{n\to\infty}\longrightarrow}}
\newcommand{\tofdt}{\overset{{\rm f.d.d.}}{\underset{t\to\infty}\longrightarrow}}
\newcommand{\todistr}{\overset{{\rm d}}{\underset{n\to\infty}\longrightarrow}}
\newcommand{\todistrt}{\overset{{\rm d}}{\underset{t\to\infty}\longrightarrow}}
\newcommand{\toweakt}{\underset{t\to\infty}{\Rightarrow}}
\newcommand{\ton}{\overset{}{\underset{n\to\infty}\longrightarrow}}
\DeclareMathOperator{\N}{\mathbb{N}}
\newcommand{\mm}{\mathcal{Z}}
\newcommand{\mn}{\N}
\newcommand{\mr}{\mathbb{R}}
\newcommand{\eps}{\varepsilon}
\begin{document}

\title[Intermediate levels of random recursive trees]{Weak convergence of the number of vertices at intermediate levels
of random recursive trees}
\author{Alexander Iksanov}
\address{Faculty of Computer Science and Cybernetics, Taras Shevchenko National University of Kyiv, 01601 Kyiv, Ukraine}
\email{iksan@univ.kiev.ua}
\author{Zakhar Kabluchko}
\address{Institut f\"{u}r Mathematische Statistik, Westf\"{a}lische Wilhelms-Universit\"{a}t M\"{u}nster,
48149 M\"{u}nster, Germany}
\email{zakhar.kabluchko@uni-muenster.de}
\subjclass[2010]{Primary: 60F05, 60J80. Secondary: 60G50, 60C05,
60F05}

\begin{abstract}
Let $X_n(k)$ be the number of vertices at level $k$ in a random
recursive tree with $n+1$ vertices. We are interested in the
asymptotic behavior of $X_n(k)$ for intermediate levels $k=k_n$
satisfying $k_n\to\infty$ and $k_n=o(\log n)$ as $n\to\infty$. In
particular, we prove weak convergence of finite-dimensional
distributions for the process $(X_n ([k_n u]))_{u>0}$, properly
normalized and centered, as $n\to\infty$. The limit is a centered
Gaussian process with covariance $(u,v)\mapsto (u+v)^{-1}$.
One-dimensional distributional convergence of $X_n(k_n)$, properly
normalized and centered, was obtained with the help of analytic
tools by Fuchs, Hwang and Neininger in
\cite{Fuchs+Hwang+Neininger:2006}. In contrast, our proofs which
are probabilistic in nature exploit a connection of our model with
certain Crump-Mode-Jagers branching processes.

\noindent
\textbf{Keywords:} Crump-Mode-Jagers branching process; Gaussian process; intermediate levels; random recursive tree; weak convergence 
\end{abstract}
\maketitle


\section{Introduction and main result}
A (deterministic) {\it recursive tree} with $n$ vertices is a
rooted tree with vertices labeled with $1,2\ldots, n$ that
has the following property: the labels of the vertices on the unique path from the root (labeled with $1$) to
any other vertex (labeled with $m\in\{2,\ldots, n\}$) form an
increasing sequence. There are $(n-1)!$ different recursive trees
with $n$ vertices, and we denote them $T_{1,n}, T_{2,n},\ldots,
T_{(n-1)!, n}$. A random object $\mathcal{T}_n$ is called {\it
random recursive tree} with $n$ vertices if it has uniform distribution on the set of recursive trees with $n$ vertices, that is,
$$\Prob\{\mathcal{T}_n=T_{i,n}\}=\frac{1}{(n-1)!},\quad i=1,2,\ldots, (n-1)!.$$

Let $X_n(k)$ be the number of vertices at level
$k\in \mn$ (that is, at distance $k$ from the root) in the random recursive
tree $\mathcal{T}_{n+1}$ on $n+1$ vertices.
It is known that $\mathcal{T}_{n+1}$ has
logarithmic height (see Theorem 1 in \cite{Pittel:1994} and~\cite{devroye:1987}), namely
$$
\frac {\max\{k\in\N\colon X_n(k) \neq 0 \}}{\log n} \ton e \quad \text{a.s.}
$$
The asymptotic behavior of the occupation numbers $X_n(k)$ as $n\to\infty$ has been much studied for various asymptotic regimes of $k=k_n$ that is allowed to be a function of $n$. In Theorem 3 of \cite{Fuchs+Hwang+Neininger:2006} it was shown by using analytic tools that for any fixed $k\in\N$,
\begin{equation}\label{fuchs}
\frac{\sqrt{2k-1}(k-1)!\big(X_n(k)-(\log n)^k/k!\big)}{(\log
n)^{k-1/2}}~\todistr~ {\rm normal}(0,1).
\end{equation}
Here and hereafter, we write  $\Rightarrow$, ${\overset{{\rm
d}}\longrightarrow}$ and ${\overset{{\rm f.d.d.}}\longrightarrow}$
to denote weak convergence in a functional space, weak convergence
of one-dimensional and finite-dimensional distributions, respectively.
Furthermore, the uniform in $k=1,2,\ldots, o(\log n)$ rate of
convergence in the uniform metric was obtained. On the other hand, in the regime where $\eps \log n < k_n < (e-\eps)\log n$ (with $\eps>0$ fixed) functional limit theorems with non-normal limits  were established in~\cite{chauvin_etal:2001,chauvin_etal:2005,kab_mar_sulz:2017}.

The present
article is a follow-up of \cite{Iksanov+Kabluchko:2018} in which a
functional limit theorem was proved for the random process
$\big(X_{[n^u]}(1),\ldots, X_{[n^u]}(k)\big)_{u\geq 0}$ for each
$k\in\mn$, properly normalized and centered, as $n\to\infty$. In particular, for $u=1$ this result yields  the following multivariate version of~\eqref{fuchs}:
\begin{equation}\label{multivariate}
\Bigg(\frac{(j-1)!\big(X_n(j)-(\log n)^j/j!\big)}{(\log
n)^{j-1/2}}\Bigg)_{j=1,\ldots,k} ~\todistr~ (N_1,\ldots,N_k),
\end{equation}
where $(N_1,\ldots,N_k)$ is a $k$-variate normal random vector with zero mean and covariances
\begin{equation}\label{eq:cov_Ni_Nj}
\E N_i N_j = \frac{1}{i+j-1}, \qquad 1\leq i,j \leq k.
\end{equation}
Let
$(k_n)_{n\in\mn}$ be a sequence of positive numbers satisfying
$k_n\to \infty$ and $k_n=o(\log n)$ as $n\to\infty$. Our purpose
is to investigate weak convergence of the process $(X_n([k_n
u]))_{u>0}$, again properly normalized and centered, thereby
providing information about occupancy of {\it intermediate levels}
in a random recursive tree on $n+1$ vertices. Our main result is given in Theorem \ref{main3}.
\begin{Theorem}\label{main3}
Let $(k_n)_{n\in\mn}$ be a sequence of positive numbers satisfying
$k_n\to \infty$ and $k_n=o(\log n)$ as $n\to\infty$. The following
limit theorem holds for the intermediate levels of a random
recursive tree with $n+1$ vertices:
\begin{equation}\label{clt55}
\Bigg(\frac{[k_n]^{1/2}([k_n u]-1)!\big(X_n([k_n u])-(\log
n)^{[k_n u]}/[k_n u]!\big)}{(\log n)^{[k_n
u]-1/2}}\Bigg)_{u>0}~\tofd~ \Bigg(\int_{[0,\,\infty)}e^{-uy}{\rm
d}B(y)\Bigg)_{u>0},
\end{equation}
where $(B(v))_{v\geq 0}$ is a standard Brownian motion.
\end{Theorem}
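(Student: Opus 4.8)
My plan is to exploit the well-known connection between random recursive trees and a Crump--Mode--Jagers (CMJ) branching process embedded in continuous time. Recall that if each vertex, upon birth, produces children at the epochs of a unit-rate Poisson process on $[0,\infty)$ (independently across vertices), then the total number of individuals alive at time $t$ is a Yule process of parameter $1$, and conditionally on having $n+1$ individuals the genealogical tree is exactly $\mathcal{T}_{n+1}$. Under this embedding, the generation of an individual born at time $t$ from an individual in generation $k-1$ contributes to level $k$; one obtains a representation in which $X_n(k)$ is, after the random-time change $t_n = \log n$ (up to lower-order corrections coming from the Yule clock, which concentrate), comparable to $Z_k(t_n)$, the number of individuals in generation $k$ alive at time $t_n$ in the CMJ process. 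The first step is therefore to make this comparison rigorous in the regime $k = [k_n u]$, $k_n \to \infty$, $k_n = o(\log n)$, reducing \eqref{clt55} to a finite-dimensional CLT for the deterministic-time process $(Z_{[k_n u]}(t))_{u>0}$ evaluated at $t = \log n$, with the same normalisation, and controlling the error created by replacing the random Yule time with $\log n$ via a Taylor expansion of $(\log n)^k/k!$ and the fact that the relative fluctuations of the Yule clock are $O(n^{-1/2})$, which is negligible on the scale $(\log n)^{k-1/2}/((k-1)!\,[k_n]^{1/2})$ once multiplied out — this last point needs care precisely because $k$ itself grows.

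The heart of the argument is then the CLT for $Z_k(t)$ in generation $k = [k_n u]$ at time $t = \log n$. Here I would use the classical decomposition of the generation sizes along the first generation: writing $\sigma_1 < \sigma_2 < \cdots$ for the birth times of the children of the root (a Poisson process), each child $i$ born at time $\sigma_i$ initiates an independent copy of the whole process, and an individual at level $k$ descends from exactly one such child and sits at level $k-1$ within that child's subtree. This yields the exact distributional recursion
\begin{equation*}
Z_k(t) \overset{d}{=} \sum_{i\geq 1} Z_{k-1}^{(i)}(t-\sigma_i)\mathbbm{1}_{\{\sigma_i \leq t\}},
\end{equation*}
with the $Z^{(i)}$ i.i.d. copies independent of the Poisson process. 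Iterating $k$ times expresses $Z_k(t)$ as a sum over $k$-fold increasing sequences of Poisson epochs — equivalently, $\E Z_k(t) = t^k/k!$ and $Z_k(t)$ has the structure of a (generalised) Poisson-driven sum. The plan is to compute the limiting covariance of the normalised vector directly from this representation: with $W_n(u) := [k_n]^{1/2}([k_n u]-1)!\,(Z_{[k_n u]}(\log n) - (\log n)^{[k_n u]}/[k_n u]!)/(\log n)^{[k_n u]-1/2}$, one checks that $\E[W_n(u)W_n(v)] \to \int_0^\infty e^{-uy}e^{-vy}\,\mathrm{d}y = (u+v)^{-1}$, which matches the covariance of the proposed limit $\int_{[0,\infty)} e^{-uy}\,\mathrm{d}B(y)$; the Gaussian limit itself should follow from a martingale CLT or a Lindeberg-type argument applied to the Poisson stochastic integral, using that the number of "effective" epochs tends to infinity. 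The abstract fact that the limit process $(\int_{[0,\infty)} e^{-uy}\,\mathrm{d}B(y))_{u>0}$ is the centred Gaussian process with covariance $(u+v)^{-1}$ is the Wiener-integral identity $\E[\int e^{-uy}\mathrm dB(y)\int e^{-vy}\mathrm dB(y)] = \int_0^\infty e^{-(u+v)y}\mathrm dy$.

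The key technical steps, in order, are: (1) set up the Poisson/Yule embedding and pass from the random number $n+1$ of vertices to the deterministic time $t = \log n$, bounding the substitution error uniformly over the finitely many values $u_1,\dots,u_m$ under consideration; (2) establish the moment asymptotics $\E[W_n(u)W_n(v)] \to (u+v)^{-1}$ from the branching recursion, the main combinatorial input being a Laplace/saddle-point estimate for the relevant multiple integrals of the Poisson intensity when the index $k=[k_nu]$ grows like a fixed multiple of $k_n = o(\log n)$; (3) prove asymptotic normality of the finite-dimensional distributions via the Cram\'er--Wold device together with a CLT for sums of independent contributions arising from the first few generations (or a martingale CLT along the Yule filtration), checking the Lindeberg condition; and (4) identify the limit as the announced Wiener integral through the matching covariance and Gaussianity.

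\textbf{Main obstacle.} The delicate point is step (2) combined with the growth of $k$: because $k = [k_n u]$ tends to infinity, the normalising constants involve $([k_nu]-1)!$ and $(\log n)^{[k_nu]}$, whose ratio must be controlled with enough precision that the $u$-dependence survives in the limit as exactly $e^{-uy}$ and not merely as a constant. Concretely, the analysis rests on the behaviour of $X_n(k)$ near the mode of the level profile; one must show that the fluctuation of the $k$-fold Poisson sum, after the stated normalisation, has variance $\to (2u)^{-1}$ and that cross-levels $u\neq v$ produce the cross-covariance $(u+v)^{-1}$ — this requires a careful asymptotic expansion (of Poisson/Charlier or saddle-point type) that is uniform in the growing index, and is precisely where the condition $k_n = o(\log n)$ is used to keep the process in the Gaussian (central) regime rather than the large-deviation regime treated in \cite{chauvin_etal:2001,chauvin_etal:2005,kab_mar_sulz:2017}. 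Verifying the Lindeberg condition in step (3) in the presence of the growing index is a secondary but nontrivial difficulty.
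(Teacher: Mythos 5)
Your overall architecture matches the paper's: the continuous-time Yule/CMJ embedding, the identity $X_n(k)\overset{\rm d}{=}Y_k(\tau_n)$, the first-generation decomposition $Y_k(t)=\sum_{i\ge 1}Y_{k-1}^{(i)}(t-S_i)\1_{\{S_i\le t\}}$, the target covariance $(u+v)^{-1}$, and the Cram\'er--Wold device. However, there is a genuine gap at the heart of your step (3): you leave the source of Gaussianity unidentified (``a martingale CLT or a Lindeberg-type argument applied to the Poisson stochastic integral''), whereas the actual mechanism is a specific two-term variance decomposition that your plan does not contain. Writing $Y_k(t)-U_k(t)$ as the sum of (a) the \emph{branching} fluctuation $\sum_j\big(Y^{(j)}_{k-1}(t-S_j)-U_{k-1}(t-S_j)\big)\1_{\{S_j\le t\}}$ and (b) the \emph{renewal} fluctuation $\sum_j U_{k-1}(t-S_j)\1_{\{S_j\le t\}}-U_k(t)$, one computes $D_k(t)={\rm Var}\,Y_k(t)=\sum_{i=0}^{k-1}\frac{t^{k+i}}{(i!)^2}\frac{(2i)!}{(k+i)!}$ and shows that part (a) has second moment $\sim \tfrac14 \frac{t^{2k}}{(k!)^2}(k/t)^2$, which is $o$ of the normalization squared precisely because $k=o(t)$; so the branching noise vanishes in the limit. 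All of the Gaussianity then comes from part (b), which is a \emph{deterministic} functional of the Poisson counting process $N$: after integration by parts it equals $\frac{k-1}{(k-1)!}\int_0^t (N(y)-y)(t-y)^{k-2}\,{\rm d}y$, and the limit $T(u)=u\int_0^\infty e^{-uy}B(y)\,{\rm d}y$ follows from Donsker's theorem for $\big(N((t/k)y)-(t/k)y\big)/\sqrt{t/k}$ together with $(1-y/k)^{[ku]-2}\to e^{-uy}$. Without isolating which of the two noise sources dominates, a direct Lindeberg or martingale argument on the full branching sum is not set up to succeed: the summands' own fluctuations and the randomness of their locations $S_j$ contribute at different orders, and it is the locations that matter.

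A secondary but real error: you assert that the relative fluctuations of the Yule clock are $O(n^{-1/2})$. In fact $\tau_n-\log n$ converges a.s.\ to a nondegenerate random variable, so the discrepancy is $O(1)$, not vanishing. The substitution $\tau_n\rightsquigarrow\log n$ is still harmless, but for a different reason: the induced error in the centering is of order $\frac{(\log n)^{k-1}}{(k-1)!}|\tau_n-\log n|$, which after the stated normalization is $O\big(\sqrt{k_n/\log n}\,\big)\to 0$ by the hypothesis $k_n=o(\log n)$; making this rigorous requires a sandwiching argument using the monotonicity of $t\mapsto Y_k(t)$ and a carefully chosen intermediate scale (the paper uses $a_n=\log n/\ell(\log n)$ with $\ell^2(t)/(t k(t))\to\infty$). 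As written, your justification of step (1) would not survive scrutiny, even though the conclusion is correct.
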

\begin{Rem}
The limit process in Theorem \ref{main3} can be defined via
integration by parts $$T(u):=\int_{[0,\infty)}e^{-uy}{\rm
d}B(y)=u\int_0^\infty e^{-uy}B(y){\rm d}y,\quad u>0.$$ The process
$T$ is a.s.\ continuous on $(0,\infty)$. However, it cannot be
defined by continuity at $u=0$ because of the oscillating behavior
of the Brownian motion at $\infty$. This explains that the limit
theorem holds for $u>0$ rather than $u\geq 0$.

It can be checked (details can be found in Section 2 of
\cite{Iksanov:2013}) that $T(u)$ has the same distribution as
$B(1)/\sqrt{2u}$ for each $u>0$. Therefore, we recover
\eqref{fuchs} when taking in \eqref{clt55} $u=1$. Note also
that $$\E T(u)T(v)=(u+v)^{-1},\quad u,v>0.$$ As a consequence of  
$$\E \left( e^uT(e^{2u}) \, e^vT(e^{2v}) \right) = \frac{e^{u+v}}{e^{2u} + e^{2v}} = \frac 1 {2 \cosh (u-v)},\quad u,v\in\mr,$$
a transformed process $\left(e^uT(e^{2u})\right)_{u\in\mr}$ is stationary Gaussian. Finally, observe that on the formal level Theorem~\ref{main3} is consistent with~\eqref{multivariate}. Indeed, taking $i=[k_n u]$ and $j=[k_n v]$ in~\eqref{eq:cov_Ni_Nj} we obtain the covariance $1/([k_nu] + [k_nv] - 1) \sim (u+v)^{-1} k_n^{-1}$ as $n\to\infty$.
\end{Rem}

\section{Connection to a CMJ-branching process}
To prove Theorem \ref{main3} we shall use the same approach as in
\cite{Iksanov+Kabluchko:2018}. The core of this approach is
distributional equality \eqref{basic2} which shows that the
process $(X_n([k_n u]))_{u>0}$ of our interest is naturally
embedded into appropriate Crump-Mode-Jagers branching process
(CMJ-process).

The random recursive tree can be constructed in continuous time as follows. At time $0$, start with a tree consisting of one vertex labeled by $1$ (the root). After an exponential time with unit mean, add to this vertex an offspring labeled by $2$. Any time a new vertex with label $n$ is added to a tree, assign to each vertex of the tree a unit exponential clock that is independent of everything else. Each time some clock rings, add an offspring to the corresponding vertex  and repeat the procedure.  Let us denote by $\tau_n$ the time at which the vertex with label $n+1$ was added to the tree. Then, the tree obtained at time $\tau_n$ has the same probability law as the random recursive tree $\mathcal T_{n+1}$.
Note that $\tau_0=0$ and for each $n\in\N$, the difference  $\tau_n-\tau_{n-1}$ is an exponential random variable with mean $1/n$. Moreover, all such differences are independent.

By construction, the times at which the root of the tree generates offspring form arrival times of a Poisson process with unit intensity. A similar statement holds for any vertex in the tree: if a vertex  was born at time $t$, then the differences between the times at which this vertex generates offspring and $t$ form arrival times of a Poisson process with unit intensity. In the following, we shall generalize this construction by replacing exponential interarrival times with arbitrary positive interarrival times.

Let $(\xi_k)_{k\in\mn}$ be independent copies of a positive random variable $\xi$. Let $S:=(S_n)_{n\in\mn}$ be
the ordinary random walk with jumps $\xi_n$ for $n\in\mn$, that
is, $S_n = \xi_1+\ldots+\xi_n$, $n \in \mn$. The corresponding renewal process $(N(t))_{t\in\R}$ is defined by
$$N(t):=\sum_{k\geq 1}\1_{\{S_k\leq t\}},\quad t\in \R.$$
Let $U(t):=\E N(t)$, where $t\in\mr$, be the renewal function.
For $t \leq  0$, we have $N(t)=0$ a.s.\ and $U(t)=0$.

We are now ready to recall the construction of the Crump-Mode-Jagers branching
process relevant to us. We are only interested in the special case when the CMJ-process is generated by the random
walk $S$. At time $\tau_0=0$ there is one individual, called the
ancestor. The ancestor produces offspring (the first generation)
with birth times given by a point process $\mm = \sum_{n\geq 1}
\delta_{S_n}$ on $\R_+:=[0,\infty)$. The first generation produces
the second generation. The shifts of birth times of the second
generation individuals with respect to their mothers' birth times
are distributed according to independent copies of the same point
process $\mm$. The second generation produces the third one, and
so on. All individuals act independently of each other.

For $k\in\mn$, denote by $Y_k(t)$ the number of the $k$th
generation individuals with birth times $\leq t$. For example,
$Y_1(t)=N(t)$ for $t\geq 0$.  For $n\in\mn$,
denote by $\tau_n$ the birth time of the $n$th individual (in the
chronological order of birth times, excluding the ancestor).

Now we are ready to state the basic observation for the proof
of Theorem \ref{main3}. In the special case when $\xi$ has exponential distribution with unit mean, the individuals of the CMJ-process correspond to vertices of the random recursive tree, the ancestor at time $0$ corresponds to the root, and the generation of the individual corresponds to the distance to the root. It follows that for a sequence $(k_n)$ with
$\lim_{n\to\infty}k_n=\infty$ we have
\begin{equation}\label{basic2}
(X_n([k_n u]))_{u>0}~\overset{{\rm d}}{=}~ (Y_{[k_n
u]}(\tau_n))_{u>0},\quad n\in\N.
\end{equation}
The basic decomposition we need reads
$$
Y_k(t)=\sum_{i\geq 1}Y^{(i)}_{k-1}(t-S_i),\quad t\geq 0, k\geq 2,
$$
where $Y_j^{(i)}(t)$ is the number of successors in the $(j+1)$st generation of the $1$st generation individual born at time $S_i$ that are born in the interval $[S_i,t+S_i]$.  By the definition of the CMJ-process, $(Y_j^{(1)}(t))_{t\geq
0}$, $(Y_j^{(2)}(t))_{t\geq 0},\ldots$ are independent copies of
$(Y_j(t))_{t\geq 0}$ which are independent of $S$.  Note
that, for $k\geq 2$, $(Y_k(t))_{t\geq 0}$ is a particular instance
of a random process with immigration at the epochs of a renewal
process which is a renewal shot noise process with random and
independent response functions (the term was introduced in
\cite{Iksanov+Marynych+Meiners:2017}; see also \cite{Iksanov:2017}
for a review).

For $t\geq 0$ and $k\in\mn$, we define $U_k(t):=\E Y_k(t)$. Then,  $U_1(t)=U(t)$ and
$$U_k(t)=\int_{[0,\,t]}U_{k-1}(t-y){\rm d}U(y),\quad k\geq 2,~ t\geq 0.$$
In the special case when the distribution of $\xi$ is exponential with unit mean, we have
$U_1(t)=t$ for $t\geq 0$ and, more generally,
\begin{equation}\label{formula}
U_k(t)=\frac{t^k}{k!},\quad k\in\mn,~ t\geq 0,
\end{equation}
which follows from the recursive formula
$U_k(t)=\int_0^t U_{k-1}(y){\rm d}y$ for $k\geq 2$.

Theorem \ref{main3} will be obtained as a consequence of the
following two results.
\begin{Theorem}\label{main4}
Let $k(t)$ be any positive function satisfying $k(t)\to \infty$
and $k(t)=o(t)$ as $t\to\infty$. Assume that the distribution of
$\xi$ is exponential with unit mean. Then
$$\bigg(\frac{[k(t)]^{1/2}([k(t)u]-1)!}{t^{[k(t)u]-1/2}}\sum_{j\geq
1}\big(Y^{(j)}_{[k(t)u]-1}(t-S_j)-U_{[k(t)u]-1}(t-S_j)\big)\1_{\{S_j\leq
t\}}\bigg)_{u>0}~\tofdt~ 0,$$ where, recalling \eqref{formula},
$U_{[k(t)u]-1}(t)=t^{[k(t)u]-1}/([k(t)u]-1)!$ for $u>0$ and $t>0$.
\end{Theorem}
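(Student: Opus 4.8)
The plan is to prove $L^2$-convergence (hence convergence in probability) of each one-dimensional marginal to $0$; since convergence in probability of finitely many coordinates to the constant $0$ forces the vector to converge in distribution to $\mathbf 0$, this yields the finite-dimensional statement. So fix $u>0$, write $A_t(u)$ for the corresponding coordinate of the process displayed in Theorem~\ref{main4}, and put $m=m(t,u):=[k(t)u]$; note $m\ge 2$ for all large $t$ because $k(t)\to\infty$. It then suffices to show $\E A_t(u)^2\to0$ as $t\to\infty$.

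\textbf{Step 1: a variance identity.} First I would condition on the random walk $S$. Setting $V_k(t):=\Var Y_k(t)$ (finite, since $Y_k(t)$ is dominated by the a.s.\ finite total population born by time $t$, which in the exponential case has moments of all orders), the defining properties of the CMJ-process make the family $\{Y^{(j)}_{m-1}(t-S_j):S_j\le t\}$ conditionally independent given $S$, with conditional mean $U_{m-1}(t-S_j)$ and conditional variance $V_{m-1}(t-S_j)$. Hence the summands in $A_t(u)$ are conditionally centered and
\[
\E A_t(u)^2=\frac{[k(t)]\,((m-1)!)^2}{t^{\,2m-1}}\,\E\Big[\sum_{j\ge1}V_{m-1}(t-S_j)\1_{\{S_j\le t\}}\Big]=\frac{[k(t)]\,((m-1)!)^2}{t^{\,2m-1}}\int_{[0,t]}V_{m-1}(t-y)\,{\rm d}U(y),
\]
where $U$ is the renewal function of $S$. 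Since $\xi$ is unit-mean exponential, $U(y)=y$, so the integral equals $\int_0^t V_{m-1}(s)\,{\rm d}s$.

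\textbf{Step 2: computing $V_k$.} Next I would pin down $V_k$. Applying the same conditioning to the basic decomposition $Y_k(t)=\sum_{i\ge1}Y^{(i)}_{k-1}(t-S_i)\1_{\{S_i\le t\}}$ and using that $(S_i)$ is a unit-rate Poisson process gives
\[
V_k(t)=\int_0^t V_{k-1}(s)\,{\rm d}s+\int_0^t U_{k-1}(s)^2\,{\rm d}s\quad(k\ge2),\qquad V_1(t)=t,
\]
the $U^2$-term coming from $\Var(\E[Y_k(t)\mid S])$. Feeding in $U_{k-1}(s)=s^{k-1}/(k-1)!$ from \eqref{formula} and iterating the integration, I would obtain the closed form
\[
V_k(t)=\sum_{i=1}^{k}\frac{1}{(k+i-1)!}\binom{2i-2}{i-1}t^{\,k+i-1},\qquad\text{hence}\qquad\int_0^t V_{m-1}(s)\,{\rm d}s=\sum_{i=1}^{m-1}\frac{1}{(m+i-1)!}\binom{2i-2}{i-1}t^{\,m+i-1}.
\]
The point to retain is that the top-degree coefficient of $V_k$ is the central binomial coefficient $\binom{2k-2}{k-1}$ divided by $(2k-1)!$.

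\textbf{Step 3 and the main obstacle.} Combining the two steps, $\E A_t(u)^2=[k(t)]\sum_{i=1}^{m-1}b_i$ with $b_i:=\dfrac{((m-1)!)^2}{(m+i-1)!}\binom{2i-2}{i-1}t^{\,i-m}$. One computes $\dfrac{b_i}{b_{i-1}}=\dfrac{2(2i-3)}{i-1}\cdot\dfrac{t}{m+i-1}\ge\dfrac{t}{m-1}$ for $2\le i\le m-1$, which exceeds $2$ for all large $t$ because $m-1\le[k(t)u]=o(t)$; hence $\sum_{i=1}^{m-1}b_i\le 2b_{m-1}$ and
\[
b_{m-1}=\frac{((m-1)!)^2}{(2m-2)!}\binom{2m-4}{m-2}\,\frac1t=\frac{\binom{2m-4}{m-2}}{\binom{2m-2}{m-1}}\,\frac1t=\frac{m-1}{2(2m-3)}\,\frac1t\le\frac1{2t},
\]
so $\E A_t(u)^2\le [k(t)]/t\to0$ since $k(t)=o(t)$. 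I expect the only genuinely delicate point to be Step~2 and the cancellation it triggers in Step~3: one must recognize $\binom{2k-2}{k-1}$ as the leading coefficient of $V_k$ so that, after dividing by $t^{2m-1}$ and multiplying by $((m-1)!)^2$, the binomial factors collapse to the harmless $\tfrac{m-1}{2(2m-3)}\le\tfrac12$ rather than blowing up with $k$; this is precisely what turns the estimate into $O(k(t)/t)$ and makes $k(t)=o(t)$ the natural hypothesis. (A cruder bound such as $V_k(t)\le\E Z(t)^2$ with $Z$ the total population is useless, since $\E Z(t)^2\asymp e^{2t}$ dwarfs $t^{2m-1}$ when $m\asymp k(t)=o(t)$.)
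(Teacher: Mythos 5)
Your proposal is correct and follows essentially the same route as the paper: both compute the second moment of each marginal exactly via the variance decomposition $D_k(t)=\int_0^t D_{k-1}(y)\,{\rm d}y+\frac{t^{2k-1}}{((k-1)!)^2(2k-1)}$ (your $V_k$ is the paper's $D_k$, and your closed form is the paper's after reindexing), and both conclude that the normalized second moment is $O(k(t)/t)\to 0$. The only difference is cosmetic: you control the polynomial sum by a geometric-ratio argument giving the clean bound $\sum_i b_i\le 2b_{m-1}\le t^{-1}$, whereas the paper isolates the top term as asymptotically dominant using Stirling-type estimates; your bound is a tidy alternative.
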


In what follows we denote by $D(0,\infty)$ ($D[0,\infty)$) the Skorokhod space of
right-continuous functions defined on $(0,\infty)$ (on $[0,\infty)$) with finite
limits from the left at positive points.
\begin{Theorem}\label{main5}
Let $k(t)$ be any positive function satisfying $k(t)\to \infty$
and $k(t)=o(t)$ as $t\to\infty$. Assume that $\sigma^2:={\rm
Var}\,\xi\in (0,\infty)$ (the distribution of $\xi$ is not assumed
exponential). Then 
\begin{equation}\label{clt22}
\left(\frac{[k(t)]^{1/2}([k(t)u]-1)!}{\sqrt{\sigma^2\mu^{-2[k(t)u]-1}t^{2[k(t)u]-1}}}\bigg(\sum_{j\geq
1}\frac{(t-S_j)^{[k(t)u]-1} \1_{\{S_j\leq
t\}}}{([k(t)u]-1)!\mu^{[k(t)u]-1}}-\frac{t^{[k(t)u]}}{([k(t)u])!\mu^{[k(t)u]}}\bigg)\right)_{u>0}
~\toweakt~ (T(u))_{u>0}
\end{equation}
in the $J_1$-topology on $D(0,\infty)$, where $\mu=\E\xi<\infty$.
In particular, we have in \eqref{clt22} weak convergence of the
finite-dimensional distributions.
\end{Theorem}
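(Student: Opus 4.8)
The plan is to reduce the statement to the functional central limit theorem (FCLT) for renewal processes by means of an integration by parts. Throughout write $k:=[k(t)u]$ (so $k\to\infty$ for each fixed $u>0$, and $k\ge2$ for $t$ large), set $f_k(x):=x^{k-1}/((k-1)!\,\mu^{k-1})$ for $x\ge0$, and $\eta(y):=N(y)-y/\mu$; note $f_k(0)=0$ and $\eta(0)=0$ a.s. Since $\sum_{j\ge1}(t-S_j)^{k-1}\1_{\{S_j\le t\}}/((k-1)!\,\mu^{k-1})=\int_{[0,\,t]}f_k(t-y)\,{\rm d}N(y)$ and $\tfrac1\mu\int_0^t f_k(t-y)\,{\rm d}y=t^{k}/(k!\,\mu^{k})$, the quantity inside the big parentheses in \eqref{clt22} equals
\[
R_t(u):=\int_{[0,\,t]}f_k(t-y)\,{\rm d}\big(N(y)-y/\mu\big)=\int_0^t f_k'(t-s)\,\eta(s)\,{\rm d}s=\frac{1}{(k-2)!\,\mu^{k-1}}\int_0^t (t-s)^{k-2}\,\eta(s)\,{\rm d}s,
\]
the middle equality being integration by parts, legitimate because $f_k(0)=0$ and $\eta(0)=0$. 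This turns the claim into a statement about the centered renewal process $\eta$.

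Next I would substitute $s=a_t w$ with $a_t:=t/[k(t)]$ (note $a_t\to\infty$ since $k(t)=o(t)$), factor out the powers of $t$ and $[k(t)]$, and multiply by the normalizing factor of \eqref{clt22}; a direct computation then gives
\[
\frac{[k(t)]^{1/2}(k-1)!}{\sqrt{\sigma^2\mu^{-2k-1}t^{2k-1}}}\;R_t(u)=\frac{k-1}{[k(t)]}\int_0^{[k(t)]}\Big(1-\frac{w}{[k(t)]}\Big)^{k-2}\,\frac{\mu^{3/2}\,\eta(a_t w)}{\sigma\sqrt{a_t}}\,{\rm d}w,\qquad k=[k(t)u].
\]
Here $(k-1)/[k(t)]\to u$, $(1-w/[k(t)])^{k-2}\to e^{-uw}$ locally uniformly in $w$ with $(1-w/[k(t)])^{k-2}\le e^{-uw/2}$ for $t$ large, and by the FCLT for renewal processes (valid since $\sigma^2\in(0,\infty)$) $\big(\mu^{3/2}\eta(a_tw)/(\sigma\sqrt{a_t})\big)_{w\ge0}\Rightarrow(B(w))_{w\ge0}$ in $D[0,\infty)$. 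Truncating the integral at a large fixed level, applying the continuous mapping theorem to $x\mapsto\int_0^M e^{-uw}x(w)\,{\rm d}w$ on $D[0,M]$, and dominating the tail by means of $\E[\eta(s)^2]=O(s)$ (so that $\int_M^\infty e^{-uw/2}|\eta(a_tw)|/\sqrt{a_t}\,{\rm d}w$ is $L^2$-small uniformly in $t$), one arrives at
\[
\frac{[k(t)]^{1/2}(k-1)!}{\sqrt{\sigma^2\mu^{-2k-1}t^{2k-1}}}\;R_t(u)\ \Longrightarrow\ u\int_0^\infty e^{-uw}B(w)\,{\rm d}w=T(u),
\]
the last equality being the integration by parts recorded in the Remark. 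Because the same process $\eta(a_t\cdot)$ and the same Brownian limit $B$ drive all values of $u$ simultaneously, the identical argument applied to the map $x\mapsto(\int_0^\infty e^{-u_iw}x(w)\,{\rm d}w)_{i=1}^m$ yields joint convergence for any $u_1,\dots,u_m>0$, which is the finite-dimensional convergence in \eqref{clt22}.

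For the functional statement it suffices to prove $J_1$-tightness on each $[\eps,M]\subset(0,\infty)$. The point to watch is that $u\mapsto(\cdots)R_t(u)$ depends on $u$ only through $[k(t)u]$ and is therefore a step function (constant on the intervals $[j/[k(t)],(j+1)/[k(t)])$), whereas the limit $T$ is continuous; so I would compare it with the \emph{continuous} process $\widetilde T_t(u):=u\int_0^\infty e^{-uw}\mu^{3/2}\eta(a_tw)(\sigma\sqrt{a_t})^{-1}\,{\rm d}w$, $u\in[\eps,M]$, which, being a fixed continuous functional of $\eta(a_t\cdot)/\sqrt{a_t}$, converges weakly to $T$ in $C[\eps,M]$ by the same FCLT-plus-continuous-mapping-plus-truncation reasoning. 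It then remains to show $\sup_{u\in[\eps,M]}|(\cdots)R_t(u)-\widetilde T_t(u)|\tp 0$, which follows by combining $|(k-1)/[k(t)]-u|\le 2/[k(t)]$, $\sup_{u\in[\eps,M],\,0\le w\le M'}|(1-w/[k(t)])^{[k(t)u]-2}-e^{-uw}|\to0$, uniform $L^2$-smallness of the tails $\int_{M'}^\infty$, and tightness of $\sup_{w\le M'}|\eta(a_tw)|/\sqrt{a_t}$; then $(\cdots)R_t(u)\Rightarrow T$ in $D[\eps,M]$, and letting $\eps\downarrow0$, $M\uparrow\infty$ gives \eqref{clt22}. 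I expect the one genuinely delicate point to be this interchange of $t\to\infty$ with the $w$-integration — replacing $(1-w/[k(t)])^{[k(t)u]-2}$ by $e^{-uw}$ and extending the range to $[0,\infty)$, uniformly enough in $u$ for the tightness step — for which the second-moment bound on $\eta$ and the truncation are needed; the integration by parts and the renewal FCLT are standard, and the change-of-variables bookkeeping, though tedious, is routine.
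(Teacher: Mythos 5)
Your proposal is correct and follows essentially the same route as the paper: integration by parts to express the statistic as $\frac{[ku]-1}{k}\int_0^k \big(1-\frac{y}{k}\big)^{[ku]-2}\frac{N((t/k)y)-\mu^{-1}(t/k)y}{\sqrt{\sigma^2\mu^{-3}t/k}}\,{\rm d}y$, the renewal FCLT with Skorokhod representation and continuous mapping on a truncated range, a first/second-moment bound on $N(y)-\mu^{-1}y$ to kill the tail, and uniform-in-$u$ control of the kernel for tightness. The only cosmetic differences are that you handle tightness by comparing with a continuous approximating process while the paper verifies Billingsley's modulus condition directly via the kernel-difference estimate, and you bound the tail by Cauchy--Schwarz with $\E[\eta(s)^2]=O(s)$ where the paper cites $\E|N(y)-\mu^{-1}y|\sim\sigma\mu^{-3/2}\E|B(1)|\,y^{1/2}$; both rest on the same estimates.
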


\section{Proof of Theorem \ref{main3}}

Throughout the proof we assume that $\xi$ is exponentially
distributed with unit mean. In particular, we have $\mu=\sigma^2=1$
in the notation of Theorem \ref{main5}. Keeping this in mind, a
combination of Theorems \ref{main4} and \ref{main5} yields
\begin{equation}\label{clt223}
\bigg(\frac{[k(t)]^{1/2}([k(t)u]-1)!}{t^{[k(t)u]-1/2}}\bigg(Y_{[k(t)u]}(t)-\frac{t^{[k(t)u]}}{([k(t)u])!}\bigg)\bigg)_{u>0}
~\tofdt~ (T(u))_{u>0}
\end{equation}
for any positive function $k(t)$ satisfying $k(t)\to\infty$ and
$k(t)=o(t)$ as $t\to\infty$.

Given a sequence $(k_n)$ as in the statement of Theorem \ref{main3}, define the function $k=k(t) = k_{[e^t]}$. Note that $k(\log n) = k_n$, $k(t)\to\infty$ and $k(t)=o(t)$ as $t\to\infty$. Choose any positive $\ell=\ell(t)$ satisfying $\lim_{t\to\infty}(\ell^2(t)/(t
k(t)))=\infty$ and $\ell(t)=o(t)$ as $t\to\infty$. For instance, one can take $\ell(t)=t^{3/4}k^{1/4}(t)$. For $n\in\mn$, set $a_n=\log
n/\ell(\log n)$.

Recall from the previous section that $\tau_n$ is the sum of $n$ independent
exponentially distributed random variables with means $1,1/2,\ldots,1/n$.
This implies that $\big(\tau_n-(1+1/2+\ldots+1/n)\big)_{n\in\mn}$ is a square
integrable (hence, convergent) martingale with respect to the
natural filtration. As a consequence, $\tau_n-\log n$ converges
a.s., whence $\lim_{n\to\infty}(\tau_n-\log n)/a_n=0$ a.s.
Equivalently, given $\varepsilon>0$ there exists an a.s.\ finite
$N>0$ such that $\log n-\varepsilon a_n\leq \tau_n\leq \log
n+\varepsilon a_n$ whenever $n\geq N$. In what follows, for ease
of notation we write $1$ for $\varepsilon$.

Fix any positive and finite $T_1<T_2$. For $u>0$ and $n\in\mn$,
set $$K_n(u):=\frac{[k(\log n)]^{1/2}([k(\log n)u]-1)!}{(\log
n)^{[k(\log n)u]-1/2}}\bigg(Y_{[k(\log n)u]}(\tau_n)-\frac{(\log
n)^{[k(\log n)u]}}{([k(\log n)u])!}\bigg).$$ In view of \eqref{basic2}, it suffices to show that
$$(K_n(u))_{u>0} ~\tofd~ (T(u))_{u>0}.$$
Obviously, for all
$\delta>0$,
$$\lim_{n\to\infty}\Prob\{\sup_{u\in [T_1,\, T_2]}K_n(u)\1_{\{N>n\}}
>\delta\}=0.$$ Since $Y_{[k(t)u]}(t_1)\leq Y_{[k(t)u]}(t_2)$ whenever $t_1<t_2$
we infer
\begin{eqnarray*}
K_n(u)\1_{\{n\geq N\}}&\leq& \frac{[k(\log n)]^{1/2}([k(\log
n)u]-1)!}{(\log n+a_n)^{[k(\log n)u]-1/2}}\bigg(Y_{[k(\log
n)u]}(\log n+a_n)-\frac{(\log n+a_n)^{[k(\log n)u]}}{([k(\log
n)u])!}\bigg)\\&\times&\Big(1+\frac{1}{\ell(\log n)}\Big)^{[k(\log
n)u]-1/2}\1_{\{n\geq N\}}\\&+& \frac{(k(\log n)\log
n)^{1/2}}{[k(\log n)u]}\Bigg(\Big(1+\frac{1}{\ell(\log
n)}\Big)^{[k(\log n)u]}-1\Bigg)=: I_n(u)\times \eta_n(u)+J_n(u)
\end{eqnarray*}

Putting in \eqref{clt223} $t=\log n$ when $t$ is an argument of
the function $k$ and $t=\log n+a_n$, otherwise we infer
$(I_n(u))_{u>0}\tofd (T(u))_{u>0}$. Further, our choice of $\ell$
entails $\lim_{n\to\infty}(\ell(\log n)/k(\log n))=\infty$, whence
$$\lim_{n\to\infty}\sup_{u\in
[0,\,T_2]}|\eta_n(u)-1|=0\quad\text{a.s.}$$ Finally, for large
enough $n$,
$$\sup_{T_1\leq u\leq T_2} J_n(u)\leq \frac{2(k(\log n)\log n)^{1/2}}{\ell(\log
n)}.$$ The right-hand side converges to zero as $n\to\infty$ by
our choice of $\ell$. Combining pieces together we obtain
$(I_n(u)\times \eta_n(u)+J_n(u))_{u>0}\tofd (T(u))_{u>0}$. The
same conclusion for the lower bound of $K_n(u)\1_{\{n\geq N\}}$
can be derived similarly. The proof of Theorem \ref{main3} is
complete.

\section{Proof of Theorem \ref{main4}}

We first prove the following.
\begin{Lemma}\label{mom_asy}
Assume that the distribution of $\xi$ is exponential with unit mean
and let $k=k(t)\to\infty$ through integers and $k(t)=o(t)$ as
$t\to\infty$. Then
$$\E \Big(\sum_{j\geq
1}\big(Y^{(j)}_{k-1}(t-S_j)-U_{k-1}(t-S_j)\big)\1_{\{S_j\leq
t\}}\Big)^2~\sim~
\frac{1}{4}\frac{t^{2k}}{(k!)^2}\Big(\frac{k}{t}\Big)^2,\quad
t\to\infty.$$
\end{Lemma}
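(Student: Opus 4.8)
The plan is to reduce the second moment to a deterministic integral by conditioning on the driving point process, to evaluate that integral in closed form, and then to show it is asymptotically equivalent to its last term. Write $W(t):=\sum_{j\geq 1}\bigl(Y^{(j)}_{k-1}(t-S_j)-U_{k-1}(t-S_j)\bigr)\1_{\{S_j\leq t\}}$ and $V_k(t):=\Var Y_k(t)$. Since $\xi$ is unit-exponential, $\mm=\sum_{n\geq1}\delta_{S_n}$ is a Poisson process of unit intensity on $[0,\infty)$; in particular $U(y)=y$ for $y\geq 0$, and the second factorial moment measure of $\mm$ is Lebesgue measure squared. The random response functions $s\mapsto Y^{(j)}_{k-1}(s)-U_{k-1}(s)$ are i.i.d., centered, and independent of $\mm$, and all moments of $Y_{k-1}$ are finite by an easy induction. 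Conditioning on $\mm$ and using these facts, one gets $\E W(t)=0$ and, by the Campbell-type variance formula for Poisson shot noise,
\[
\E W(t)^2=\Var W(t)=\int_0^t\E\bigl(Y_{k-1}(s)-U_{k-1}(s)\bigr)^2\,\mathrm{d}s=\int_0^t V_{k-1}(s)\,\mathrm{d}s.
\]
Thus the statement is equivalent to an asymptotic expansion of $\int_0^t V_{k-1}(s)\,\mathrm{d}s$.

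To find $V_k$ I would apply the branching decomposition $Y_k(t)=\sum_{i\geq 1}Y^{(i)}_{k-1}(t-S_i)\1_{\{S_i\leq t\}}$, again conditioning on $\mm$: the diagonal part contributes $\int_0^t\E Y_{k-1}(s)^2\,\mathrm{d}s$, while the off-diagonal part equals $\int_0^t\int_0^t U_{k-1}(t-x)U_{k-1}(t-y)\,\mathrm{d}x\,\mathrm{d}y=U_k(t)^2$. Subtracting $U_k(t)^2$ yields
\[
V_k(t)=\int_0^t V_{k-1}(s)\,\mathrm{d}s+\int_0^t U_{k-1}(s)^2\,\mathrm{d}s,\qquad k\geq 2,
\]
with $V_1(t)=\Var N(t)=t$. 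Since $U_{k-1}(s)=s^{k-1}/(k-1)!$, each step of the recursion is a single integration, and unrolling it gives the closed form
\[
V_k(t)=\sum_{j=1}^{k}\binom{2j-2}{j-1}\frac{t^{k+j-1}}{(k+j-1)!}.
\]
One more integration (after replacing $k$ by $k-1$) gives
\[
\E W(t)^2=\int_0^t V_{k-1}(s)\,\mathrm{d}s=\sum_{j=1}^{k-1}a_j,\qquad a_j:=\binom{2j-2}{j-1}\frac{t^{k+j-1}}{(k+j-1)!}.
\]

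It remains to analyze $\sum_{j=1}^{k-1}a_j$ as $t\to\infty$ with $k=k(t)\to\infty$ through integers and $k=o(t)$. The last term satisfies
\[
a_{k-1}=\binom{2k-4}{k-2}\frac{t^{2k-2}}{(2k-2)!}=\frac{k^2(k-1)^2}{(2k-2)(2k-3)}\cdot\frac{t^{2k-2}}{(k!)^2}\ \sim\ \frac14\,\frac{t^{2k}}{(k!)^2}\Bigl(\frac{k}{t}\Bigr)^2,
\]
which is exactly the asserted right-hand side. For the smaller terms I would use the ratio identity $a_{j-1}/a_j=(j-1)(k+j-1)/(2(2j-3)t)$, which is bounded by $k/t$ for all $2\leq j\leq k-1$ (since $k+j-1\leq 2k$ and $j-1\leq 2j-3$). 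Hence $a_{k-1-m}/a_{k-1}\leq (k/t)^m$, so $a_{k-1}\leq\sum_{j=1}^{k-1}a_j\leq a_{k-1}/(1-k/t)$, and since $k/t\to 0$ we conclude $\sum_{j=1}^{k-1}a_j\sim a_{k-1}$, completing the proof.

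The routine-looking but genuinely essential step is this last comparison, where the hypothesis $k=o(t)$ is indispensable: for $k$ of order $t$ the tail of the sum would no longer be negligible and even the leading constant would change. The recursion step is elementary but must use the Poisson-specific identities $U(y)=y$ and the product form of the second factorial moment measure, both of which fail for non-exponential $\xi$; this is why Lemma~\ref{mom_asy} is stated only in the unit-exponential case.
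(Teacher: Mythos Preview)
Your proof is correct and follows the same overall route as the paper: both derive the recursion $V_k(t)=\int_0^t V_{k-1}(s)\,{\rm d}s+\int_0^t U_{k-1}(s)^2\,{\rm d}s$, solve it to obtain the identical closed form, and then show the top term dominates. The only substantive difference is in that last step: the paper bounds the lower terms via Stirling's formula and a split of the sum at $[k/2]$, whereas you use the ratio identity $a_{j-1}/a_j=(j-1)(k+j-1)/(2(2j-3)t)\le k/t$ to get a geometric majorant in one line. Your argument is shorter, makes the role of the hypothesis $k=o(t)$ transparent, and avoids Stirling entirely; the paper's derivation of the recursion, on the other hand, cites general renewal formulas valid for arbitrary $\xi$, while you exploit the Poisson second factorial moment measure, which is simpler but specific to the exponential case.
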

\begin{proof}
Without any restrictions on the distribution of a positive random
variable $\xi$ the following formulas were obtained in Lemma 4.2
of \cite{Iksanov+Kabluchko:2018}: for $k\geq 2$ and $t\geq 0$
\begin{eqnarray}\label{aux5}
D_k(t)&:=&{\rm Var}\,Y_k(t)=\E \bigg(\sum_{j\geq
1}\big(Y^{(j)}_{k-1}(t-S_j)-U_{k-1}(t-S_j)\big)\1_{\{S_j\leq
t\}}\bigg)^2\\&+& \E\bigg(\sum_{j\geq
1}U_{k-1}(t-S_j)\1_{\{S_j\leq t\}}-U_k(t)\bigg)^2;\notag
\end{eqnarray}
\begin{equation}\label{aux10}
\E \bigg(\sum_{j\geq
1}\big(Y^{(j)}_{k-1}(t-S_j)-U_{k-1}(t-S_j)\big)\1_{\{S_j\leq
t\}}\bigg)^2=\int_{[0,\,t]}D_{k-1}(t-y){\rm d}U(y);
\end{equation}
\begin{eqnarray}\label{mom}
\E\bigg(\sum_{j\geq 1}U_{k-1}(t-S_j)\1_{\{S_j\leq
t\}}-U_k(t)\bigg)^2&=&2\int_{[0,\,t]} U_{k-1}(t-y)U_k(t-y){\rm
d}U(y)\\&+&\int_{[0,\,t]}U^2_{k-1}(t-y){\rm d}U(y)-U_k^2(t).\notag
\end{eqnarray}

Assume now that the distribution of $\xi$ is exponential with unit
mean. Invoking \eqref{formula} and \eqref{mom} we obtain
\begin{eqnarray*}
\E\bigg(\sum_{j\geq 1}U_{k-1}(t-S_j)\1_{\{S_j\leq
t\}}-U_k(t)\bigg)^2&=&2\int_0^t U_{k-1}(y)U_k(y){\rm d}y+\int_0^t
U^2_{k-1}(y){\rm
d}y-U^2_k(t)\\&=&\frac{t^{2k-1}}{((k-1)!)^2(2k-1)},\quad k\geq 2,
t\geq 0.
\end{eqnarray*}
Using the latter formula together with \eqref{aux5} and
\eqref{aux10} we have
$$D_k(t)=\int_0^t D_{k-1}(y){\rm d}y+\E\bigg(\sum_{j\geq 1}U_{k-1}(t-S_j)\1_{\{S_j\leq
t\}}-U_k(t)\bigg)^2=\int_0^t D_{k-1}(y){\rm
d}y+\frac{t^{2k-1}}{((k-1)!)^2(2k-1)}.$$ This in combination with
the boundary condition $D_1(t)=t$ immediately gives
$$D_k(t)=\sum_{i=0}^{k-1}\frac{t^{k+i}}{(i!)^2}\frac{(2i)!}{(k+i)!},\quad k\in\mn, t\geq 0$$
whence, recalling \eqref{aux10}, $$\E \bigg(\sum_{j\geq
1}\big(Y^{(j)}_{k-1}(t-S_j)-U_{k-1}(t-S_j)\big)\1_{\{S_j\leq
t\}}\bigg)^2 =\int_0^t D_{k-1}(y){\rm
d}y=\sum_{i=0}^{k-2}\frac{t^{k+i}}{(i!)^2}\frac{(2i)!}{(k+i)!},\quad
k\geq 2, t\geq 0.$$ We claim that the left-hand side is asymptotic
to the $(k-2)$nd term of the last sum which is
$$\frac{t^{2k-2}}{((k-2)!)^2}\frac{(2k-4)!}{(2k-2)!}~\sim~\frac{1}{4}\frac{t^{2k}}{(k!)^2}\Big(\frac{k}{t}\Big)^2,\quad
t\to\infty.$$ To prove this, it suffices to show that
$$\lim_{t\to\infty} \sum_{i=1}^{k-3}\frac{A(i,k,t)}{t^{k-i-2}}=0,$$
where
$$A(i,k,t):=\frac{(k!)^2 (2i)!}{(i!)^2 (k+i)! k^2}.$$ Using the inequality
\begin{equation*}
(2\pi n)^{1/2}(ne^{-1})^n \leq n!\leq e(2\pi n)^{1/2}(ne^{-1})^n,
\quad n\in\mn
\end{equation*}
which is a consequence of the Stirling formula in the form
\begin{equation*}
n!= (2\pi n)^{1/2}(ne^{-1})^n e^{\theta_n/(12 n)}, n\in\mn,
\end{equation*}
where $\theta_n\in (0,1)$, we obtain
\begin{equation}\label{aux7}
\frac{1}{2^{1/2}e} A(i,k,t)\leq
\frac{4^i}{i^{1/2}}\frac{k^{2k-1}}{(k+i)^{k+i+1/2}e^{k-i-2}}\leq
4^i k^{1/2}\Big(\frac{k}{e}\Big)^{k-i-2}.
\end{equation}
This yields
$$\frac{1}{2^{1/2}e}\sum_{i=1}^{[k/2]-1}\frac{A(i,k,t)}{t^{k-i-2}}\leq
k^{1/2}\sum_{i=k-[k/2]-1}^{k-3}\Big(\frac{4k}{et}\Big)^i\leq
k^{1/2}\Big(\frac{4k}{et}\Big)^{k-[k/2]-1}\Big(1-\frac{4k}{et}\Big)^{-1}$$
having utilized $4^i\leq 4^{k-i-2}$ which holds for $1\leq i\leq
[k/2]-1$. The right-hand side goes to zero as $t\to\infty$.
Another appeal to \eqref{aux7} gives
$$\frac{1}{2^{1/2}e}\sum_{i=[k/2]}^{k-3}\frac{A(i,k,t)}{t^{k-i-2}}\leq k^{1/2}
\Big(\frac{k}{et}\Big)^{k-[k/2]-2}\sum_{i=[k/2]}^{k-3}4^i\leq
\frac{1}{3} k^{1/2}4^{k-2}\Big(\frac{k}{et}\Big)^{k-[k/2]-2}.$$
The right-hand side converges to zero as $t\to\infty$ which
completes the proof of the lemma.
\end{proof}

We are ready to prove Theorem \ref{main4}.
\begin{proof}[Proof of Theorem \ref{main4}]
In view of the Cram\'{e}r-Wold device and Markov's inequality weak
convergence of the finite-dimensional distributions to the zero
vector is a consequence of
\begin{eqnarray*}
&&\frac{(([ku]-1)!)^2 k}{t^{2[ku]-1}} \E \Big(\sum_{j\geq
1}\big(Y^{(j)}_{[ku]-1}(t-S_j)-U_{[ku]-1}(t-S_j)\big)\1_{\{S_j\leq
t\}}\Big)^2\\&\sim~&\frac{(([ku]-1)!)^2 k}{t^{2[ku]-1}}
\frac{1}{4}
\frac{t^{2[ku]}}{(([ku])!)^2}\Big(\frac{[ku]}{t}\Big)^2~\sim~
\frac{k}{4t} ~\to~ 0
\end{eqnarray*}
for each $u>0$ as $t\to\infty$. Here, we have used Lemma
\ref{mom_asy} for the first asymptotic equivalence.
\end{proof}

\section{Proof of Theorem \ref{main5}}
First, we use the Cram\'{e}r-Wold device to prove weak convergence
of finite-dimensional distributions in \eqref{clt22}, that is, for
any $j\in\mn$, any real $\alpha_1,\ldots, \alpha_j$ and any
$0<u_1<\ldots<u_j<\infty$
\begin{equation}\label{fidi}
\sum_{i=1}^j \alpha_i\frac{k^{1/2}([ku_i]-1)! Z(ku_i,
t)}{\sqrt{\sigma^2\mu^{-2[ku_i]-1}t^{2[ku_i]-1}}}~\todistrt~
\sum_{i=1}^j \alpha_i u_i\int_0^\infty B(y)e^{-u_i y}{\rm d}y,
\end{equation}
where $$Z(ku, t):=\sum_{j\geq
1}\Big(\frac{(t-S_j)^{[ku]-1}}{([ku]-1)!\mu^{[ku]-1}}\1_{\{S_j\leq
t\}}-\frac{t^{[ku]}}{([ku])!\mu^{[ku)]}}\Big).$$ To ease notation,
here and hereafter, we write $k$ for $k(t)$.

We have for any $u, T>0$ and sufficiently large $t$
\begin{eqnarray*}
&&\frac{k^{1/2}([ku]-1)! Z(ku,
t)}{\sqrt{\sigma^2\mu^{-2[ku]-1}t^{2[ku]-1}}}\\&=&\frac{k^{1/2}}{\sqrt{\sigma^2\mu^{-3}t^{2[ku]-1}}}\int_{[0,\,t]}(t-y)^{[ku]-1}{\rm
d}(N(y)-\mu^{-1}y)\\&=&\frac{k^{1/2}([ku]-1)}{\sqrt{\sigma^2\mu^{-3}t^{2[ku]-1}}}\bigg(\int_0^{Tk/t}(N(y)-\mu^{-1}y)(t-y)^{[ku]-2}{\rm
d}y+\int_{Tk/t}^t (N(y)-\mu^{-1}y)(t-y)^{[ku]-2}{\rm
d}y\bigg)\\&=& \frac{[ku]-1}{k}\int_0^T
\frac{N((t/k)y)-\mu^{-1}(t/k)y}{\sqrt{\sigma^2\mu^{-3}t/k}}\big(1-\frac{y}{k}\big)^{[ku]-2}{\rm
d}y\\&+&
\frac{k^{1/2}([ku]-1)}{\sqrt{\sigma^2\mu^{-3}t^{2[ku]-1}}}\int_{Tk/t}^t
(N(y)-\mu^{-1}y)(t-y)^{[ku]-2}{\rm d}y.
\end{eqnarray*}
By Theorem 3.1 on p.~162 in \cite{Gut:2009}
$$\frac{N(t\cdot)-\mu^{-1}(\cdot)}{\sqrt{\sigma^2 \mu^{-3}t}}~\toweakt~ B(\cdot)$$
in the $J_1$-topology on $D[0,\infty)$. By Skorokhod's representation
theorem there exist versions $\widehat{N}$ and $\widehat{B}$ such
that
\begin{equation}\label{cs}
\lim_{t\to\infty}\sup_{0\leq y\leq
T}\bigg|\frac{\widehat{N}(ty)-\mu^{-1}ty}{\sqrt{\sigma^2\mu^{-3}t}}-\widehat{B}(y)\bigg|=0\quad\text{a.s.}
\end{equation}
for all $T>0$. Using \eqref{cs} with $t/k$ replacing $t$ in
combination with
$$\lim_{t\to\infty}\sup_{0\leq y\leq T}\Big|\big(1-\frac{y}{k(t)}\big)^{[k(t)u]-2}-e^{-uy}\Big|=0$$ we infer
$$\lim_{t\to\infty}\sup_{u\in [0,T]}\Big|\frac{\widehat{N}((t/k)y)-\mu^{-1}(t/k)y}{\sqrt{\sigma^2\mu^{-3}t/k}}\big(1-\frac{y}{k}\big)^{[ku]-2}
-\widehat{B}(y)e^{-uy}\Big|=0\quad \text{a.s.}$$ This shows that
$$\lim_{t\to\infty}\sum_{i=1}^j \alpha_i\frac{[ku_i]-1}{k} \int_0^T \frac{\widehat{N}((t/k)y)-\mu^{-1}(t/k)y}{\sqrt{\sigma^2\mu^{-3}t/k}}
\big(1-\frac{y}{k}\big)^{[ku_i]-2}{\rm d}y=\sum_{i=1}^j \alpha_i
u_i\int_0^T \widehat{B}(y)e^{-u_iy}{\rm d}y\quad \text{a.s.}$$ and
thereupon
$$\sum_{i=1}^j \alpha_i \frac{[ku_i]-1}{k}\int_0^T
\frac{N((t/k)y)-\mu^{-1}(t/k)y}{\sqrt{\sigma^2\mu^{-3}t/k}}\big(1-\frac{y}{k}\big)^{[ku_i]-2}{\rm
d}y~\todistrt~\sum_{i=1}^j \alpha_i u_i\int_0^T B(y)e^{-u_iy}{\rm
d}y.$$ Since $\lim_{T\to\infty}\sum_{i=1}^j \alpha_i u_i \int_0^T
B(y)e^{-u_iy}{\rm d}y=\sum_{i=1}^j\alpha_i u_i \int_0^\infty
B(y)e^{-u_iy}{\rm d}y$ a.s.\ it remains to prove that
$$\lim_{T\to\infty}{\lim\sup}_{t\to\infty}\,\Prob\bigg\{\bigg|\sum_{i=1}^j\alpha_i \frac{k^{1/2}([ku_i]-1)}{t^{[ku_i]-1/2}}\int_{Tk/t}^t
(N(y)-\mu^{-1}y)(t-y)^{[ku_i]-2}{\rm
d}y\bigg|>\varepsilon\bigg\}=0$$ for all $\varepsilon>0$. In view
of Markov's inequality and the fact that $\E |N(y)-\mu^{-1}y|\sim
\sigma\mu^{-3/2} \E |B(1)|y^{1/2}$ as $y\to\infty$ (see Theorem
8.4 on p.~98 in \cite{Gut:2009}) the latter is a consequence of
$$\lim_{T\to\infty}{\lim\sup}_{t\to\infty}\,\frac{k^{1/2}([ku]-1)}{t^{[ku]-1/2}}\int_{Tk/t}^t
y^{1/2}(t-y)^{[ku]-2}{\rm d}y=0$$ for $u>0$. To justify it,
observe that
$$\frac{k^{1/2}([ku]-1)}{t^{[ku]-1/2}}\int_{Tk/t}^t
y^{1/2}(t-y)^{[ku]-2}{\rm d}y=\frac{[ku]-1}{k}\int_T^k
y^{1/2}\big(1-\frac{y}{k}\big)^{[ku]-2}{\rm d}y~\to~
u\int_T^\infty y^{1/2}e^{-uy}{\rm d}y$$ as $t\to\infty$ by
Lebesgue's dominated convergence theorem. The proof of
\eqref{fidi} is complete. For later use, we note that exactly the
same argument leads to
\begin{equation}\label{intermediate}
\int_0^k
\bigg|\frac{N((t/k)y)-\mu^{-1}(t/k)y}{\sqrt{\sigma^2\mu^{-3}t/k}}\bigg|
\big(1-\frac{y}{k}\big)^{[ku]-2}(1+y){\rm
d}y~\todistrt~\int_0^\infty |B(y)|e^{-uy}(1+y){\rm d}y
\end{equation}
for $u>0$.

It remains to prove tightness in \eqref{clt22}. By Theorem 15.5 in
\cite{Billingsley:1968} it suffices to show that for any
$0<a<b<\infty$, $\varepsilon>0$ and $\gamma\in (0,1)$ there exist
$t_0>0$ and $\delta>0$ such that
\begin{equation}\label{tight} \Prob\bigg\{\sup_{a\leq u,v\leq
b, |u-v|\leq \delta}\,\bigg|\frac{k^{1/2}([ku]-1)! Z(ku,
t)}{\sqrt{\sigma^2\mu^{-2[ku]-1}t^{2[ku]-1}}}-\frac{k^{1/2}([kv]-1)!
Z(kv,t)}{\sqrt{\sigma^2\mu^{-2[kv]-1}t^{2[kv]-1}}}\bigg|>\varepsilon\bigg\}\leq
\gamma
\end{equation}
for all $t\geq t_0$. As a preparation for the proof of
\eqref{tight}, let us note that for $a\leq u,v\leq b$ such that
$|u-v|\leq \delta$, $y\in [0,k]$ and large enough $k$ we have
\begin{eqnarray*}
&&\bigg|\frac{[ku]-1}{k}\big(1-\frac{y}{k}\big)^{[ku]-2}-\frac{[kv]-1}{k}\big(1-\frac{y}{k}\big)^{[kv]-2}\bigg|\\&=&\big(1-\frac{y}{k}\big)^{[k(u\wedge
v)]-2} \bigg|\frac{[k(u\vee v)]-[k(u\wedge
v)]}{k}\big(1-\frac{y}{k}\big)^{[k(u\vee v)]-[k(u\wedge
v)]}\\&-&\frac{[k(u\wedge
v)]-1}{k}\big(1-\big(1-\frac{y}{k}\big)^{[k(u\vee v)]-[k(u\wedge
v)]}\big)\bigg|\\&\leq&
\big(1-\frac{y}{k}\big)^{[ka]-2}\bigg(\frac{[k(u\vee
v)]-[k(u\wedge v)]}{k}+b\frac{[k(u\vee v)]-[k(u\wedge
v)]}{k}y\bigg)\\&\leq&
C|u-v|\big(1-\frac{y}{k}\big)^{[ka]-2}(1+y)\leq C\delta
\big(1-\frac{y}{k}\big)^{[ka]-2}(1+y)
\end{eqnarray*}
for appropriate constant $C>0$. With this at hand
\begin{eqnarray*}
&&\sup_{a\leq u,v\leq b, |u-v|\leq
\delta}\,\bigg|\frac{k^{1/2}([ku]-1)! Z(ku,
t)}{\sqrt{\sigma^2\mu^{-2[ku]-1}t^{2[ku]-1}}}-\frac{k^{1/2}([kv]-1)!
Z(kv,t)}{\sqrt{\sigma^2\mu^{-2[kv]-1}t^{2[kv]-1}}}\bigg|\\&=&
\sup_{a\leq u,v\leq b, |u-v|\leq \delta}\,\bigg|\int_0^k
\frac{N((t/k)y)-\mu^{-1}(t/k)y}{\sqrt{\sigma^2\mu^{-3}t/k}}
\Big(\frac{[ku]-1}{k}\big(1-\frac{y}{k}\big)^{[ku]-2}-
\frac{[kv]-1}{k}\big(1-\frac{y}{k}\big)^{[kv]-2}\Big){\rm
d}y\bigg|\\&\leq& C\delta \int_0^k
\bigg|\frac{N((t/k)y)-\mu^{-1}(t/k)y}{\sqrt{\sigma^2\mu^{-3}t/k}}\bigg|
\big(1-\frac{y}{k}\big)^{[ka]-2}(1+y){\rm d}y.
\end{eqnarray*}
Recalling \eqref{intermediate} and choosing $\delta$ sufficiently
small we arrive at \eqref{tight}. The proof of Theorem \ref{main5}
is complete.

\section{Open problem}

It is an interesting open problem whether weak convergence of the
finite-dimensional distributions in Theorem \ref{main3} can be
strengthened to weak convergence on $D(0,\infty)$. To ensure this
it is sufficient to show that the left-hand side of the centered
formula in Theorem \ref{main4} converges weakly to the zero
function on $D(0,\infty)$. Indeed, if the latter were true, the
proof of Theorem \ref{main3} would only require an inessential
modification. However, we have been able neither prove, nor
disprove the aforementioned functional version of Theorem
\ref{main4}.

\vspace{1cm} \noindent   {\bf Acknowledgements}  \quad
\footnotesize 
A part of this work was done while A.~Iksanov was visiting
M\"{u}nster in January 2018. A.I. gratefully acknowledges
hospitality and the financial support by DFG SFB 878 ``Geometry,
Groups and Actions''.

\normalsize

\end{document}